\newtheorem{theorem}{Theorem}[section]
\newtheorem{lemma}[theorem]{Lemma}
\newtheorem{corollary}[theorem]{Corollary}
\theoremstyle{definition}
\newtheorem{example}[theorem]{Example}
\newtheorem{definition}[theorem]{Definition}
\theoremstyle{plain}
\noindent\makebox[0mm][r]{(\roman{enumi})}}
\DeclareMathAlphabet{\mathpzc}{OT1}{pzc}{m}{it}
\def\<{\langle}
\def\>{\rangle}
\def\CC{{\mathbb C}}
\def\RR{{\mathbb R}}
\def\ZZ{{\mathbb Z}}
\def\boldone{\boldsymbol{1}}
\def\boldzero{\boldsymbol{0}}
\def\del{\partial}
\def\dx#1{{\partial_{x_{#1}}}}
\def\dz#1{{\partial_{z_{#1}}}}
\def\Horn{{\rm Horn}}
\def\nHorn{{\rm nHorn}}
\def\sHorn{{\rm sHorn}}
\def\qdeg{{\rm qdeg}} 
\def\tdeg{{\rm tdeg}} 
\def\hom{\operatorname{Hom}}
\def\ext{\operatorname{Ext}}
\def\Var{{\rm Var}}
\def\gr{{\rm gr}}
\def\barX{{\bar{X}}}
\def\barZ{{\bar{Z}}}
\def\endrk{\hfill$\hexagon$}
\newcommand*{\defeq}{\mathrel{\vcenter{\baselineskip0.5ex \lineskiplimit0pt
                     \hbox{\scriptsize.}\hbox{\scriptsize.}}}%
                     =}
\numberwithin{equation}{section}
\begin{document}%%%%%%%%%%%%%%%%%%%%
%%%%%%%%%%%%%%%%%%%%%%%%%%%%%%%%%%%
\title[On normalized Horn Systems]{On normalized Horn systems}

\author[Berkesch]{Christine Berkesch}
\address{School of Mathematics, University of Minnesota, Minneapolis, MN 55455.}
\email{cberkesc@umn.edu}
\thanks{CBZ was partially supported by NSF Grants DMS 1440537 and DMS 1661962.
}

\author[Matusevich]{Laura Felicia Matusevich}
\address{Department of Mathematics \\
Texas A\&M University \\ College Station, TX 77843.}
\email{laura@math.tamu.edu}
\thanks{LFM was partially supported by NSF Grants DMS 0703866, DMS
  1001763, and a Sloan Research Fellowship.} 

\author[Walther]{Uli Walther}
\address{Department of Mathematics\\ Purdue University\\ West Lafayette, IN \ 47907.}
\email{walther@math.purdue.edu}
\thanks{UW was partially supported by NSF Grant DMS 0901123 and 
DMS 1401392.}

\subjclass[2010]{
Primary: 
14L30, %Group actions on varieties or schemes (quotients)
33C70; %Other hypergeometric functions and integrals in several variables
Secondary: 
13N10, %Rings of differential operators and their modules
14M25, %Toric varieties, Newton polyhedra
32C38. %Sheaves of differential operators and their modules, $D$-modules 
}

\begin{abstract}
  We characterize the (regular) holonomicity of Horn systems
  of differential equations under a hypothesis that
  captures the most widely studied classical hypergeometric systems.
\end{abstract}
\maketitle

\setcounter{tocdepth}{1}
%\tableofcontents
\vspace{-4mm}
%\setcounter{section}{1}

%%%%%%%%%%%%%%%%%%%%%%%%%%%%%%%%%%%
\section{Introduction}
%%%%%%%%%%%%%%%%%%%%%%%%%%%%%%%%%%%
\label{sec:intro}

Let $\barZ = \CC^m$ with coordinates $z_1,z_2,\dots, z_m$, and denote by 
$\dz{1},\dz{2},\dots,\dz{m}$ the partial derivative operators $\del/\del
z_1,\dots,\del/\del z_m$. The Weyl algebra $D_\barZ$, generated by
the $z_i$ and $\dz{i}$, is the ring of algebraic differential operators on
$\barZ$.

The goal of this article is to obtain $D$-module theoretic results
about \emph{normalized Horn systems}; in particular, we seek criterion for the
following two properties. A (left) $D_\barZ$-module $M$ is
\emph{holonomic} if $\ext_{D_\barZ}^j(M,D_\barZ)=0$ whenever $j\neq
m$; it is \emph{regular holonomic} if the natural restriction map from
formal to analytic solutions of $M$ is an isomorphism in the derived
category. We note that if $\mathscr{O}$ is a function space, the
space of $\mathscr{O}$-valued solutions of $M$ is
$\hom_{D_\barZ}(M,\mathscr{O})$. Thus, if $m>1$, regularity of $D_\barZ$-modules
involves the derived solutions
$\ext^j_{D_\barZ}(M,\mathscr{O})$ for $j>0$, where $\mathscr{O}$ is either the
space of formal or analytic solutions of $M$ at any given point
of $\barZ$. 

\begin{definition}
\label{def:Horn}
Let $B$ be an $n\times m$ integer matrix of full rank $m$ with rows
$B_1,B_2,\dots,B_n$, whose $\ZZ$-column span contains no nonzero vectors
with all nonnegative entries.
Let $\kappa \in \CC^n$ and $\eta \defeq [z_1\dz{1},z_2\dz{2},\dots,z_m\dz{m}]$.
Construct the following elements of $D_\barZ$:
\[
q_k \defeq \prod_{b_{ik}>0} \prod_{\ell=0}^{b_{ik}-1} ( B_i \cdot
\eta+ \kappa_i - \ell ) \quad \text{ and } \quad p_k \defeq
\prod_{b_{ik}<0} \prod_{\ell=0}^{|b_{ik}|-1} ( B_i \cdot \eta+
\kappa_i - \ell ) .
\]
\begin{enumerate}
\item \label{def:reg-Horn} The \emph{Horn hypergeometric system}
  associated to $B$ and $\kappa$ is the left $D_\barZ$-ideal
\begin{equation}
\label{eqn:horndef}
\Horn(B,\kappa) \defeq 
D_\barZ\cdot\< q_k - z_k p_k \mid k=1,2,\dots m\> 
\subseteq D_\barZ.
\end{equation}
\item \label{def:nHorn} Assume that $B$ has an $m\times m$  identity
  submatrix,
  and assume that
  the corresponding entries of $\kappa$ are all zero.  The
  \emph{normalized Horn hypergeometric system} associated to $B$ and
  $\kappa$ is the left $D_\barZ$-ideal
\begin{equation}
\label{eqn:nhorndef}
\nHorn(B,\kappa) \defeq 
D_\barZ\cdot\left\< \frac{1}{z_k} q_k - p_k \, 
\bigg\vert\, k=1,2,\dots m\right\> 
\subseteq D_\barZ.
\end{equation}
\end{enumerate}
\endrk
\end{definition}

Normalized Horn systems abound in the mathematical literature, and they 
include  the (generalized) Gauss
hypergeometric equation(s), as well as the systems of differential equations
corresponding to the Appell series, Horn series in two
variables, Lauricella series, and Kamp\'e de Feri\'et functions, among
others.
In general, Horn hypergeometric systems have proved resistant to $D$-module
theoretic study; in fact, we are aware of
only~\cite{sadykov-mathscand,DMS,bmw-functor}, which  
contain partial results regarding the holonomicity of
$\Horn(B,\kappa)$. 

In the late 1980s, Gelfand, Graev, Kapranov, and Zelevinsky introduced
a different kind of hypergeometric system, known as
\emph{$A$-hypergeometric}, or \emph{GKZ systems}, that are
much more amenable to a $D$-module theoretic approach \cite{GGZ, GKZ}. A modification of
these systems lead to \emph{lattice basis $D$-modules}, whose
solutions are in one-to-one correspondence with the solutions of Horn
systems.  

\begin{definition}
\label{def:latticeBasisDmod}
Let $B$ and $\kappa$ be as in Definition~\ref{def:Horn}, and set
$d=n-m$. Let $A=(a_{ij})$ 
be a $d\times n$ integer matrix of full rank, whose columns span
$\ZZ^d$ as a lattice, and such that $AB=0$.
Let $\barX =\CC^n$ with
coordinates $x_1,x_2,\dots,x_n$ and consider the Weyl algebra $D_\barX$
generated by the $x_i$ and their corresponding $\dx{i}=\frac{\del}{\del x_i}$. Denote
$\theta_i = x_i\dx{i}$ for $i=1,2,\dots,n$.
The polynomial ideal 
\[
I(B) \defeq \left\< \prod_{(B_j)_i>0}\dx{i}^{(B_j)_i} -
\prod_{(B_j)_i<0}\dx{i}^{(B_j)_i} \right\> \subset \CC[\dx{1},\dx{2},\dots,\dx{n}]
\]
is called a
\emph{lattice basis ideal}. Let $E_i \defeq \sum_{j=1}^n a_{ij} \theta_j$,
and denote by $E-A\kappa$ the sequence $E_1-(A\kappa)_1,E_2-(A\kappa)_2,\dots,E_d-(A\kappa)_d$, which are known as \emph{Euler operators}.
The \emph{lattice basis $D_\barX$-module associated to $B$
  and $\kappa$} is the quotient of $D_\barX$ by the left $D_\barX$-ideal
$H(B,\kappa)$ generated by $I(B)$ and $E-A\kappa$.
\endrk
\end{definition}

The solutions of Horn hypergeometric systems and lattice basis binomial $D$-modules
are related as follows. Let $B$ and $\kappa$ be as in
Definition~\ref{def:Horn}, and denote 
by $b_1,b_2,\dots,b_m$ the columns of $B$.
Let $\varphi(z)=\varphi(z_1,z_2,\dots,z_m)$ be a germ of a
holomorphic function at a generic point of $\barZ$. Then $\varphi(z)$
is a solution of 
$D_\barZ/\Horn(B,\kappa)$ if and only if $x^\kappa g(x^{b_1},x^{b_2},\dots,x^{b_m})$
is a solution of $D_\barX/H(B,\kappa)$ (at an appropriate generic point of
$\barX$). Note that this does not imply any relationship among higher
derived solutions of the corresponding modules, or about solutions at
non-generic points. 

This correspondence between the solutions of the Horn and lattice basis $D$-modules does not imply that there is a $D$-module theoretic relationship between the systems. 
This would be desirable, since lattice basis $D$-modules are fairly well understood; in particular, there are complete characterizations of their holonomicity and regularity (see Section~\ref{sec:latticeBasisDmod}), so one could hope to transfer these results from the lattice basis to the Horn setting.
Unfortunately, the following example shows that such a $D$-module theoretic relationship cannot exist in general.

\begin{example}
\label{ex:preliminaryCounterexample}
The lattice basis $D_\barX$-module corresponding to
\[
B = \left[ \begin{array}{rrr}
1  &  1 & 2 \\
-1 & -1 & 0 \\
0  &  0 & -1 \\
1  & 0  & 0 \\
0  & 1  & 0 \\
-1 & 0  &0 \\
0  &-1 & 0 
\end{array} \right]   \;\;
\text{and}\;\; \kappa= \begin{bmatrix} 2 \\ 0 \\ 0 \\ 0 \\ 0 \\ 0 \\
  0 \end{bmatrix}
\]
is holonomic, but $D_\barZ/\Horn(B,\kappa)$ is not. This can be tested explicitly using the computer algebra system
\texttt{Macaulay2} \cite{M2}.
\endrk
\end{example}

However, for \emph{normalized} Horn systems, the main result in this
article provides a relationship between these and their lattice basis
counterparts. 

\begin{theorem}
\label{thm:restriction}
Suppose that the top $m$ rows of $B$ form an identity matrix and
$\kappa_1=\kappa_2=\dots=\kappa_m=0$.  Let $r$ denote the inclusion $r\colon
\barZ \hookrightarrow \barX$ given by $(z_1,z_2,\dots,z_m) \mapsto
(z_1,z_2,\dots,z_m,1,\dots,1)$.  If $r^*$ is the restriction (inverse
image under $r$) on $D_\barX$-modules, then there is an equality
\[
\frac{D_\barZ}{\nHorn(B,\kappa)} = r^* \left(\frac{D_\barX}{
    H(B,\kappa)}\right).
\] 
\end{theorem}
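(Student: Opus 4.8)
The plan is to compute the restriction as $r^*\bigl(D_\barX/H(B,\kappa)\bigr)=D_\barX/\bigl(H(B,\kappa)+\sum_{i>m}(x_i-1)D_\barX\bigr)$ and to exhibit it as a cyclic left $D_\barZ$-module whose annihilator is $\nHorn(B,\kappa)$. Write $D_\barX=D_\barZ\otimes_\CC D'$ with $D'$ the Weyl algebra in $x_{m+1},\dots,x_n$, identify $z_j=x_j,\ \dz{j}=\dx{j}$ for $j\le m$, index the last $d=n-m$ coordinates by $m+1,\dots,n$, and for $\gamma\in\NN^d$ put $\partial_{x''}^{\gamma}=\prod_{i>m}\dx{i}^{\gamma_i}$ and $F_\gamma(\eta)=\prod_{i>m}\prod_{\ell=0}^{\gamma_i-1}(B_i\cdot\eta+\kappa_i-\ell)\in\CC[\eta_1,\dots,\eta_m]$. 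Let $\overline{B}$ be the bottom $d\times m$ block of $B$, and $\overline{b}_k^+,\overline{b}_k^-\in\NN^d$ the positive and negative parts of its $k$-th column. Since $b_{kk}=1$ and $\kappa_k=0$, one has $q_k=(z_k\dz{k})F_{\overline{b}_k^+}(\eta)$ and $p_k=F_{\overline{b}_k^-}(\eta)$, so the $k$-th generator of $\nHorn(B,\kappa)$ is $\dz{k}F_{\overline{b}_k^+}(\eta)-F_{\overline{b}_k^-}(\eta)$.

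First I would replace the Euler operators by simpler relations. From $AB=0$ and the shape of $B$ one gets $A=[A'\mid A'']$ with $A'=-A''\overline{B}$, and the projection of $\operatorname{rowspan}(A)=\ker(B^{\top})$ onto the last $d$ coordinates is injective, so $A''\in\Gl_d(\QQ)$. Using $\kappa_1=\dots=\kappa_m=0$ one finds $E_s-(A\kappa)_s=\sum_{i>m}a_{si}(\theta_i-B_i\cdot\eta-\kappa_i)$, and invertibility of $A''$ over $\QQ$ then gives $H(B,\kappa)=D_\barX\cdot I(B)+\sum_{i>m}D_\barX(\theta_i-B_i\cdot\eta-\kappa_i)$. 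Next, present the restriction: the quotient $D_\barX/\sum_{i>m}(x_i-1)D_\barX$ is free as a left $D_\barZ$-module on the classes $v_\gamma:=[\partial_{x''}^{\gamma}]$ ($\gamma\in\NN^d$) --- as is standard for a closed embedding; concretely $D_\barX$ is $D_\barZ$-free on $\{\prod_{i>m}(x_i-1)^{\alpha_i}\partial_{x''}^{\gamma}\}$ and the summands with $\alpha\neq0$ span $\sum_{i>m}(x_i-1)D_\barX$ --- and on it left multiplication by each $x_i$ ($i>m$) is the identity. Thus $r^*(D_\barX/H(B,\kappa))$ is the quotient of $\bigoplus_{\gamma}D_\barZ v_\gamma$ by the left $D_\barZ$-submodule $N$ generated by the classes of the elements $\partial_{x''}^{\gamma}g$, $g$ a generator from the previous step. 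Computing these classes (via $\partial_{x''}^{\gamma}\theta_i=x_i\partial_{x''}^{\gamma+e_i}+\gamma_i\partial_{x''}^{\gamma}$, $x_iQ\equiv Q$, and $\partial^{b_k^+}=\dz{k}\partial_{x''}^{\overline{b}_k^+}$, $\partial^{b_k^-}=\partial_{x''}^{\overline{b}_k^-}$) shows $N$ is generated by
\[
v_{\gamma+e_i}+(\gamma_i-B_i\cdot\eta-\kappa_i)\,v_\gamma\quad(i>m),\qquad \dz{k}\,v_{\gamma+\overline{b}_k^+}-v_{\gamma+\overline{b}_k^-}\quad(1\le k\le m),
\]
for $\gamma\in\NN^d$, where $e_i\in\NN^d$ denotes the $i$-th unit vector.

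Then I would eliminate and compare. The $D_\barZ$-linear surjection $\bigoplus_\gamma D_\barZ v_\gamma\to D_\barZ$, $v_\gamma\mapsto F_\gamma(\eta)$, kills the first family (because $F_{\gamma+e_i}(\eta)=(B_i\cdot\eta+\kappa_i-\gamma_i)F_\gamma(\eta)$) and splits via $1\mapsto v_0$; since the quotient of $\bigoplus_\gamma D_\barZ v_\gamma$ by the first family is generated over $D_\barZ$ by the class of $v_0$, that quotient is $\cong D_\barZ$ with $v_\gamma\mapsto F_\gamma(\eta)$. Hence $r^*(D_\barX/H(B,\kappa))\cong D_\barZ/\overline N$ with $\overline N=D_\barZ\langle u_{k,\gamma}:=\dz{k}F_{\gamma+\overline{b}_k^+}(\eta)-F_{\gamma+\overline{b}_k^-}(\eta):1\le k\le m,\ \gamma\in\NN^d\rangle$. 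For $\gamma=0$, $u_{k,0}=\dz{k}F_{\overline{b}_k^+}(\eta)-F_{\overline{b}_k^-}(\eta)=\frac1{z_k}q_k-p_k$, so $\nHorn(B,\kappa)\subseteq\overline N$. The reverse inclusion I would prove by induction on $|\gamma|$: if $\gamma_{i_0}\ge1$, writing $F_{\gamma+\overline{b}_k^{\pm}}(\eta)=c^{\pm}F_{\gamma-e_{i_0}+\overline{b}_k^{\pm}}(\eta)$ with $c^{\pm}=B_{i_0}\cdot\eta+\kappa_{i_0}-\gamma_{i_0}-(\overline{b}_k^{\pm})_{i_0}+1$ gives
\[
u_{k,\gamma}-c^-\,u_{k,\gamma-e_{i_0}}=\bigl(\dz{k}c^+-c^-\dz{k}\bigr)\,F_{\gamma-e_{i_0}+\overline{b}_k^+}(\eta),
\]
and since $\dz{k}(B_{i_0}\cdot\eta)=(B_{i_0}\cdot\eta+b_{i_0k})\dz{k}$ and $(\overline{b}_k^+)_{i_0}-(\overline{b}_k^-)_{i_0}=b_{i_0k}$, the factor $\dz{k}c^+-c^-\dz{k}$ vanishes identically. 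So $u_{k,\gamma}=c^-u_{k,\gamma-e_{i_0}}\in\nHorn(B,\kappa)$ by the inductive hypothesis, whence $\overline N=\nHorn(B,\kappa)$; the natural map $D_\barZ\to r^*(D_\barX/H(B,\kappa))$ is then the quotient map $D_\barZ\to D_\barZ/\nHorn(B,\kappa)$, which is the asserted equality.

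I expect the main obstacle to be careful bookkeeping rather than a single hard point: one must handle the restriction correctly (note $\sum_{i>m}(x_i-1)D_\barX$ is a right, not a left, ideal of $D_\barX$, so $r^*(D_\barX/H(B,\kappa))$ is only manifestly a left $D_\barZ$-module), perform the Euler-operator rewriting over $\QQ$ using $(A'')^{-1}$, and compute the images in the second step without commutator or sign errors. Once the presentation $D_\barZ/\overline N$ is in hand, the special shape of $B$ enters only through the identity $\dz{k}c^+-c^-\dz{k}=0$, which is a one-line verification.
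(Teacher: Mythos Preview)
Your argument is correct and takes a genuinely different route from the paper's. The paper first proves that the $b$-function of $H(B,\kappa)$ for the restriction $r$ divides $s$ (its Lemma~2.1), which via the restriction algorithm of \cite{SST} guarantees that $r^*(D_\barX/H(B,\kappa))$ is cyclic, with annihilator obtained by intersecting $H(B,\kappa)$ with $R_m=\CC[x_1,\dots,x_n]\langle\dx{1},\dots,\dx{m}\rangle$ and then setting $x_{m+1}=\cdots=x_n=1$; it then computes this intersection explicitly by clearing denominators with a monomial $\mu$ and using the rewriting~\eqref{eqn:substitution}. You instead present $r^*(D_\barX)$ directly as the free left $D_\barZ$-module on $\{v_\gamma\}$, replace the Euler operators by $\theta_i-B_i\cdot\eta-\kappa_i$ via the invertibility of $A''$, compute the image of $H(B,\kappa)$ as explicit relations, and use the first family of relations (the Euler part) to exhibit the quotient as $D_\barZ$ via $v_\gamma\mapsto F_\gamma(\eta)$; cyclicity thus falls out of the computation rather than from a $b$-function. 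Your induction showing $u_{k,\gamma}=c^-u_{k,\gamma-e_{i_0}}$ then identifies the remaining relations with $\nHorn(B,\kappa)$. Your approach is more elementary and fully self-contained (no appeal to the SST restriction machinery), at the price of somewhat heavier bookkeeping; the paper's approach is shorter once the $b$-function lemma is in hand, and it makes transparent why the normalization hypothesis matters (it is exactly what makes $\mu_k(\del_x^{(b_k)_+}-\del_x^{(b_k)_-})$ produce the $k$th $\nHorn$ generator upon setting $x_i=1$).
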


This result is inspired by~\cite[\S\S11-13]{beukers-notes}. In this
work, Beukers obtains examples of classical Horn series by setting to
one certain variables in the series solutions of associated
$A$-hypergeometric systems. Theorem~\ref{thm:restriction} implies this
correspondence among series solutions, and much more.

\begin{corollary}
\label{cor:restriction} 
Under the hypotheses of Theorem~\ref{thm:restriction}, the (regular)
holonomicity of the modules $D_\barZ/\nHorn(B,\kappa)$ and $D_\barX/
H(B,\kappa)$ are equivalent.  
\end{corollary}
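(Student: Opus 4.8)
The plan is to deduce the corollary directly from Theorem~\ref{thm:restriction} together with standard preservation properties of the inverse image functor $r^*$ along a closed embedding $r\colon \barZ \hookrightarrow \barX$. Since $r$ is the coordinate inclusion sending $(z_1,\dots,z_m)$ to $(z_1,\dots,z_m,1,\dots,1)$, it is a closed embedding of a smooth affine variety into another, of codimension $d = n-m$, and $r^*$ here denotes the (non-derived, or naive) inverse image on $D$-modules used in the statement of the theorem. I would first record that $r$ is non-characteristic for both of the modules in question, or more precisely argue that the relevant inverse image agrees with the derived inverse image up to shift, so that holonomicity is preserved and reflected.

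The first step is the ``only if'' direction for holonomicity: if $D_\barX/H(B,\kappa)$ is holonomic, then its restriction $r^*(D_\barX/H(B,\kappa))$ is holonomic as a $D_\barZ$-module, because the inverse image (suitably derived) of a holonomic module along any morphism of smooth varieties is again holonomic (Kashiwara), and taking cohomology of a bounded complex with holonomic cohomology preserves holonomicity; by Theorem~\ref{thm:restriction} this restriction \emph{is} $D_\barZ/\nHorn(B,\kappa)$, which is therefore holonomic. The same argument with ``regular holonomic'' in place of ``holonomic'' gives the regular case, since regular holonomicity is also stable under inverse image along morphisms of smooth varieties. The second step is the converse direction. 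Here I would use that the lattice basis module $D_\barX/H(B,\kappa)$ is, by construction, equivariant (quasi-homogeneous) with respect to the torus action associated to $A$: the Euler operators $E_i - (A\kappa)_i$ force a grading, and the orbit of the point $(1,\dots,1)$ under the subtorus $\ker(A)^\circ$-complement sweeps out a neighborhood transverse to $\barZ$, so that $D_\barX/H(B,\kappa)$ is determined by its restriction to $\barZ$ together with this equivariance. Concretely, one recovers $D_\barX/H(B,\kappa)$ from $r^*(D_\barX/H(B,\kappa)) = D_\barZ/\nHorn(B,\kappa)$ by the inverse of $r^*$ on the relevant equivariant category, or one simply observes that the $b$-function / characteristic variety computation showing holonomicity of the lattice basis module can be run on the restriction, using that the extra variables $x_{m+1},\dots,x_n$ enter only through the homogeneity operators.

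The step I expect to be the main obstacle is making the converse direction (holonomicity of the Horn system $\Rightarrow$ holonomicity of the lattice basis module) fully rigorous, since $r^*$ is not invertible on all $D_\barX$-modules and one genuinely needs to exploit the special structure of $H(B,\kappa)$ — its torus-equivariance and the fact that the ambient extra directions are ``Euler-homogeneous'' — to reconstruct the module (or at least its holonomicity) from the restriction. One clean way to finish: show that $\barZ$ meets every component of the characteristic variety $\charVar(D_\barX/H(B,\kappa))$ transversally off a set over which the module is already known to behave well, so that $\dim \charVar$ on $\barX$ exceeds $\dim\charVar$ of the restriction on $\barZ$ by exactly $d$; then holonomicity (dimension $=m$) of the restriction forces dimension $n$ on $\barX$, i.e.\ holonomicity of $D_\barX/H(B,\kappa)$. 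The regular holonomic case then follows because, for these equivariant modules, regularity along $\barZ$ propagates along the torus orbits to all of $\barX$, or alternatively because regular holonomicity can be checked after the faithfully flat, regularity-preserving base change given by the torus action. I would present the holonomicity equivalence first, then bootstrap to the regular holonomic statement using that $r^*$ and its (equivariant) inverse both preserve regular holonomicity.
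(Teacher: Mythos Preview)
Your forward direction is correct and is exactly what the paper does: restriction along a closed embedding preserves (regular) holonomicity, so Theorem~\ref{thm:restriction} gives that implication immediately.

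The converse direction, however, is a genuine gap in your proposal, and you are right to flag it as the main obstacle. Neither of your suggested routes---reconstructing $D_\barX/H(B,\kappa)$ from its restriction via torus-equivariance, or a transversality argument on characteristic varieties---is carried out, and making either one precise would be substantial work (for instance, you would have to rule out components of $\charVar(D_\barX/H(B,\kappa))$ that live entirely over the locus $x_{m+1}=\cdots=x_n=1$ being approached badly, and the ``faithfully flat, regularity-preserving base change given by the torus action'' is not a standard statement one can invoke off the shelf).

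The paper avoids all of this by a short indirect argument. It introduces the \emph{saturated} Horn ideal $\sHorn(B,\kappa)=D_Z\cdot\Horn(B,\kappa)\cap D_\barZ$ and imports from~\cite{bmw-functor} the fact that $D_\barX/H(B,\kappa)$ is (regular) holonomic if and only if $D_\barZ/\sHorn(B,\kappa)$ is. The key elementary observation is then the inclusion $\nHorn(B,\kappa)\subseteq\sHorn(B,\kappa)$, so that $D_\barZ/\sHorn(B,\kappa)$ is a quotient of $D_\barZ/\nHorn(B,\kappa)$. Since (regular) holonomic modules are closed under quotients, if $D_\barZ/\nHorn(B,\kappa)$ is (regular) holonomic then so is $D_\barZ/\sHorn(B,\kappa)$, and hence so is $D_\barX/H(B,\kappa)$. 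Thus the converse is obtained in two lines from an external result, rather than by any direct analysis of how $r^*$ interacts with characteristic varieties or equivariance.
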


%%%%%%%%%%%%%%%%%%%%%%%%
\subsection*{Outline}
%%%%%%%%%%%%%%%%%%%%%%%%

In \S\ref{sec:restriction}, we prove
Theorem~\ref{thm:restriction}. In \S\ref{sec:latticeBasisDmod}, we recall
the characterizations for holonomicity and regularity of lattice basis
$D$-modules and prove Corollary~\ref{cor:restriction}.

%%%%%%%%%%%%%%%%%%%%%%%%
\subsection*{Acknowledgements}
%%%%%%%%%%%%%%%%%%%%%%%%
We are grateful to Frits Beukers, Alicia Dickenstein, Brent Doran,
Anton Leykin, Ezra Miller, Christopher O'Neill, Mikael Passare, and
Bernd Sturmfels, who have generously shared their insight and
expertise with us while we worked on this project. 
Parts of this work were carried out at the Institut Mittag-Leffler
program on Algebraic Geometry with a view towards Applications and the
MSRI program on Commutative Algebra. We thank the program organizers
and participants for exciting and inspiring research atmospheres.  

%%%%%%%%%%%%%%%%%%%%%%%%%%%%%%%%%%%%%%
\section{Normalized Horn systems are restrictions}
%%%%%%%%%%%%%%%%%%%%%%%%%%%%%%%%%%%%%%
\label{sec:restriction}

In this section, we prove Theorem~\ref{thm:restriction}. We use the
notation and assumptions introduced in Definitions~\ref{def:Horn} and~\ref{def:latticeBasisDmod}.

By~\cite[\S5.2]{SST}, the restriction $r^*$ of a cyclic
$D_\barX$-module $D_\barX/J$ is given by
\begin{equation}
\label{eqn:restriction}
r^*\left(\frac{D_\barX}{J}\right)=
\frac{\CC[x_1,x_2,\dots,x_n]}{\<x_{m+1}-1,x_{m+2}-1,\dots,x_n-1\>} \otimes_{\CC[x_1,x_2,\dots,x_n]}
\frac{D_\barX}{J}. 
\end{equation}
While the restriction of a cyclic $D_\barX$-module is not
necessarily cyclic, to establish Theorem~\ref{thm:restriction} first show that  $r^*(D_\barX/H(B,\kappa))=r^*(D_\barX/D_\barX\cdot\< I(B),E-A\kappa\>)$ is indeed cyclic. To do this, we
compute the $b$-function for the restriction, as defined in~\cite[\S\S5.1-5.2]{SST}.

\begin{lemma}
\label{lemma:b-function restriction}
If the matrix formed by the top $m$ rows of $B$ has rank $m$, then the
$b$-function of $H(B,\kappa)$ 
for restriction to
$\Var(x_{m+1}-1,x_{m+2}-1,\dots,x_n-1)$ divides $s$.
\end{lemma}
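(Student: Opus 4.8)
The plan is to reduce to restriction to a coordinate subspace and then to exhibit the operator $s$ inside the restriction $b$-ideal using only the Euler operators $E-A\kappa$, with the lattice basis binomials $I(B)$ playing no role. First I would trade the affine subspace $\Var(x_{m+1}-1,\dots,x_n-1)$ for the coordinate subspace $\Var(x_{m+1},\dots,x_n)$ via the translation automorphism $\phi$ of $D_\barX$ given by $\phi(x_j)=x_j+1$ for $j>m$, $\phi(x_j)=x_j$ for $j\le m$, and $\phi(\dx{j})=\dx{j}$ for all $j$. Restriction $b$-functions are unchanged under this change of coordinates; $I(B)\subseteq\CC[\dx{1},\dots,\dx{n}]$ is fixed by $\phi$; and since $\phi(\theta_j)=(x_j+1)\dx{j}=\theta_j+\dx{j}$ for $j>m$ one gets $\phi(E_i-(A\kappa)_i)=E_i-(A\kappa)_i+L_i$ with $L_i\defeq\sum_{j=m+1}^n a_{ij}\dx{j}$. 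Thus it suffices to show that the restriction $b$-function of $\phi(H(B,\kappa))$ to $\Var(x_{m+1},\dots,x_n)$ divides $s$; by \cite[\S\S5.1-5.2]{SST} this is the monic generator of $\ini_{(-w,w)}(\phi(H(B,\kappa)))\cap\CC[s]$, where $w_j=1$ for $j>m$, $w_j=0$ otherwise, and $s=\sum_{j=m+1}^n\theta_j$.

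Next I would record that, with respect to $(-w,w)$, every $\theta_j$ has weight $0$ while every $\dx{j}$ with $j>m$ has weight $1$; hence $E_i-(A\kappa)_i$ is $(-w,w)$-homogeneous of weight $0$, and therefore $\ini_{(-w,w)}(E_i-(A\kappa)_i+L_i)=L_i$ whenever $L_i\neq 0$. The key algebraic input is that the $d\times d$ matrix $A''$ formed by the last $d$ columns of $A$ is invertible. To see this, write $A=[\,A'\mid A''\,]$ and let $B'$ be the top $m\times m$ block of $B$, which has rank $m$ by hypothesis; then $AB=0$ forces $A'=-A''B''(B')^{-1}$, so if a row vector $v$ satisfies $vA''=0$ then also $vA'=0$, whence $vA=0$ and $v=0$ since $A$ has rank $d$. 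Consequently no $L_i$ vanishes, all of $L_1,\dots,L_d$ lie in $\ini_{(-w,w)}(\phi(H(B,\kappa)))$, and because the coefficient matrix of $(L_1,\dots,L_d)$ in the $\dx{j}$ with $j>m$ is the invertible matrix $A''$, each such $\dx{j}$ lies in that initial ideal as well.

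To finish, I would use that $\ini_{(-w,w)}(\phi(H(B,\kappa)))$ is a \emph{left} ideal: left multiplication of $\dx{j}$ by $x_j$ shows $\theta_j=x_j\dx{j}$ lies in it for each $j>m$, and summing over $j$ gives $s=\sum_{j=m+1}^n\theta_j\in\ini_{(-w,w)}(\phi(H(B,\kappa)))\cap\CC[s]$; hence the monic generator of this intersection divides $s$. I expect the points needing care to be the reduction to a coordinate subspace (so that the $b$-function computation of \cite{SST} applies verbatim) and the weight bookkeeping showing that the \emph{linear} term $L_i$, rather than $E_i-(A\kappa)_i$, is the $(-w,w)$-initial form of the transformed Euler operator. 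The substantive step is the invertibility of $A''$: this is where the rank hypothesis on the top $m$ rows of $B$ enters, and it is what allows the Euler operators alone to carry the argument.
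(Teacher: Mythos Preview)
Your proposal is correct and follows essentially the same approach as the paper: translate to the coordinate subspace, use only the Euler operators together with the invertibility of the last $d$ columns of $A$ (which is exactly the rank hypothesis on the top of $B$), and conclude that each $\theta_j$ with $j>m$ lies in the initial ideal. The only cosmetic difference is the order of operations: the paper first forms the $\nu^{(j)}$-combinations of the Euler operators to isolate a single $\theta_j$, then translates, left-multiplies by $x_j$, and takes the $(-w,w)$-initial form to obtain $\theta_j$ directly, whereas you translate first, take initial forms to get the $L_i$, invert $A''$ to extract each $\dx{j}$, and then multiply by $x_j$.
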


\begin{proof}
  Begin with the change of variables $x_j \mapsto x_j+1$ for $m+1\leq
  j \leq m$, and let $J$ denote the $D_\barX$-ideal obtained from
$H(B,\kappa)$ via this change of variables. 
We now compute the $b$-function of $J$ for
  restriction to $\Var(x_{m+1},x_{m+2},\dots,x_n)$.

With $w = (\boldzero_m,\boldone_d)\in\RR^n$, the vector $(-w,w)$
induces a filtration on $D_\barX$, and the $b$-function we wish to
compute is a generator of the principal ideal $\gr^{(-w,w)}(J) \cap \CC[s]$,
where $s \defeq \theta_{m+1}+\theta_{m+2}+\cdots+\theta_n$.  Note that, since the
submatrix of $B$ formed by its first $m$ rows has rank $m$, the
submatrix of $A$ consisting of its last $n-m=d$ columns has rank
$d$. Thus there are vectors $\nu^{(m+1)},\nu^{(m+2)},\dots,\nu^{(n)} \in \RR^d$
such that $(\nu^{(j)}A)_k = \delta_{jk}$ for $m+1\leq k\leq n$.  For
$m+1\leq j \leq n$, with $\beta=A\kappa$, 
\[
\sum_{i=1}^d \nu^{(j)}_i E_i - \nu^{(j)}\cdot\beta 
\, =  \, 
\sum_{k=1}^m (\nu^{(j)}A)_k \theta_k + \theta_j - \nu^{(j)}\cdot\beta
\, \in \, D_\barX\cdot\<E-\beta\>. 
\]
Using our change of variables and multiplying by $x_j$, for $m+1\leq j
\leq n$ we obtain
\[
\sum_{k=1}^m (\nu^{(j)}A)_k x_j\theta_k +x_j^2\dx{j} + \theta_j - \nu^{(j)}\cdot\beta x_j
\, \in \, J. 
\]
Taking initial terms with respect to $(-w,w)$ of this expression, it
follows that $\theta_j \in \gr^{(-w,w)}(J)$ for each $m+1\leq j \leq
n$. Therefore $s=\theta_{m+1}+\theta_{m+2}+\cdots+\theta_n \in \gr^{(-w,w)}(J)$,
and the result follows.
\end{proof}

\begin{proof}[Proof of Theorem~\ref{thm:restriction}]
By Lemma~\ref{lemma:b-function restriction},
$r^*(D_\barX/H(B,\kappa))$ is of the form $D_\barZ/L$. 
In order to find 
the ideal $L$, we must perform the intersection
\begin{equation}
\label{eqn:intersection}
H(B,\kappa)\cap R_m, \text{ where } R_m 
\defeq \CC[x_1,x_2,\dots,x_n]\<\dx{1},\dx{2},\dots,\dx{m}\> \subseteq D_\barX, 
\end{equation}
and then set $x_{m+1}=x_{m+1}=\cdots=x_n=1$.  We proceed by systematically
producing elements of the intersection~\eqref{eqn:intersection}.
Using the same argument as in the proof of Lemma~\ref{lemma:b-function
  restriction}, we see that for $m+1 \leq j \leq n$, each $\theta_j$
can be expressed modulo $D_\barX\cdot\<E-\beta\>$ as a linear
combination of $\theta_1,\theta_2,\dots,\theta_m$ and the parameters $\kappa$.
By our assumption on $B$, $\theta_j$ can be written explicitly as
follows:
\begin{equation}
\label{eqn:substitution}
\theta_j = \kappa_j + \sum_{i=1}^m b_{ji}
  \theta_i \quad \textrm{mod}\; D_\barX\cdot\<E-\beta\> \qquad
  \text{for }\, m+1 \leq j \leq n. 
\end{equation}

Now if $P \in D_\barX$, then there is a monomial $\mu$ in
$x_{m+1},x_{m+2},\dots,x_n$ so that the resulting operator $\mu P$ can be
written in terms of $x_1,x_2,\dots,x_n,\dx{1},\dx{2},\dots,\dx{m}$, and
$\theta_{m+1},\theta_{m+2},\dots,\theta_n$. 
In addition, working modulo
$D_\barX\cdot\<E-\beta\>$, one can replace $\theta_j$ when $j>m$ by
the expressions~\eqref{eqn:substitution}.  Thus $\mu P$ is an element
of $R_m$ modulo $R_m\cdot\<E-\beta\>$. 
If this procedure is applied
to $E_i-\beta_i$, the result is zero.  We now apply it to one of
the generators $\del_x^{(b_k)_+}-\del_x^{(b_k)_-}$ of $I(B)$, where
$b_1,b_2,\dots,b_m$ denote the columns of $B$. An appropriate monomial in
this case is $\mu_k =\prod_{j=m+1}^{n} x_j^{|b_{jk}|}$.  Then the fact
that $b_{kk}=1$ for $1\leq k \leq m$ and~\eqref{eqn:substitution}
together imply that
\begin{align}
\mu_k( & \del_x^{(b_k)_+} -\del_x^{(b_k)_-}) 
\nonumber
\\
\nonumber
&= \big(\textstyle\prod_{b_{jk}<0} x_j^{-b_{jk}}\big) \dx{k}^{b_{kk}}
\textstyle\prod_{j>m, b_{jk}>0 } x_j^{b_{jk}}\dx{j}^{b_{jk}} - 
  \textstyle\prod_{j>m, b_{jk}>0} x_j^{b_{jk}}
  \textstyle\prod_{b_{jk}<0} x_j^{-b_{jk}}\dx{j}^{-b_{jk}} 
\nonumber
\\
\begin{split}
&= \big(\textstyle\prod_{b_{jk}<0} x_j^{-b_{jk}}\big) \dx{k} \textstyle\prod_{j>m,
  b_{jk}>0} \textstyle\prod_{\ell=0}^{b_{jk}-1} (\kappa_j + \sum_{i=1}^m
  b_{ji} \theta_i - \ell) 
\\ & \qquad\qquad\qquad\qquad\qquad
 -
  \textstyle\prod_{j>m, b_{jk}>0} x_j^{b_{jk}} \textstyle\prod_{b_{jk}<0}
  \textstyle\prod_{\ell=0}^{-b_{jk}-1} (\kappa_j + \sum_{i=1}^m
   b_{ji} \theta_i-\ell). 
   \label{eq:almost:nHorn}
\end{split}
\end{align}
Note that setting 
$x_{m+1}=x_{m+2}=\cdots=x_n=1$ in~\eqref{eq:almost:nHorn}, we obtain the $k$th
generator of the normalized Horn system $\nHorn(B,\kappa)$, since
$b_{jk}<0$ implies $j>m$. 
This shows that $\nHorn(B,\kappa)$ is contained in the intersection \eqref{eqn:intersection} after setting $x_{m+1}=x_{m+2}=\cdots=x_n=1$.

Now suppose that $P$ is an element of the
intersection~\eqref{eqn:intersection}. In particular, $P$ belongs to
$I(B)+\<E-\beta\>$, so there are $P_1,P_2,\dots,P_m,Q_1,Q_2,\dots,Q_d \in
D_\barX$ such that  
\[
P = \sum_{k=1}^m P_k (\del_x^{(b_k)_+}-\del_x^{(b_k)_-}) +
\sum_{i=1}^d Q_i (E_i-\beta_i).
\] 
If we multiply $P$ on the left by a monomial in $x_{m+1},x_{m+2},\dots,x_n$
and set $x_{m+1}=x_{m+2}=\cdots=x_n=1$, the result is the same as if we
set $x_{m+1}=x_{m+2}=\cdots=x_n=1$ on $P$ directly. Thus we choose an
appropriate monomial $\mu$ such that
\[
\mu P = \sum_{k=1}^m \tilde{P}_k \mu_k (\del_x^{(b_k)_+}-\del_x^{(b_k)_-}) +
\mu \sum_{i=1}^d Q_i (E_i-\beta_i)
\]
for some operators $\tilde{P}_1,\tilde{P}_2,\dots,\tilde{P}_m$. But then, the
result of setting $x_{m+1}=x_{m+2}=\cdots=x_n=1$ on $\mu P$ (the same
as if this were done to $P$) is a combination of the generators of
$\nHorn(B,\kappa)$. 
Thus, we have shown that the intersection \eqref{eqn:intersection}
after setting $x_{m+1}=x_{m+2}=\cdots=x_n=1$ is contained in
$\nHorn(B,\kappa)$. 
We conclude that $r^*(D_\barZ/H(B,\kappa))=D_\barZ/\nHorn(B,\kappa)$.
\end{proof}

%%%%%%%%%%%%%%%%%%
\section{Lattice basis \texorpdfstring{$D$}{D}-modules}
%%%%%%%%%%%%%%%%%%
\label{sec:latticeBasisDmod}

The ring $D_\barX$ is $\ZZ^d$-graded by setting
$\deg(\del_{x_i})=-\deg(x_i)=a_i$, where $a_1,\dots,a_n$ are the
columns of the matrix $A$ from Definition~\ref{def:latticeBasisDmod}.
This grading, which is also inherited by the
polynomial ring $\CC[\del_x]\defeq\CC[\dx{1},\dx{2},\dots,\dx{n}]$, is known as the \emph{$A$-grading}. 
An \emph{$A$-graded binomial $\CC[\del_x]$-ideal} $I$ is an ideal generated by elements of the form 
$\del_x^u - \lambda \del_x^v$. 
(In this definition, $\lambda = 0$ is allowed; in other words,
monomials are admissible generators in a binomial ideal.) 

Note that $H(B,\kappa)$ is $A$-graded, and then lattice basis binomial $D_\barX$-modules are $A$-graded. It is this
grading that can be used to determine the set of parameters $\kappa$ for which
$D_\barX/H(B,\kappa)$-module is holonomic
(Theorem~\ref{thm:toralHolonomic}). We need the notion of quasidegrees
of a module, originally introduced in~\cite{MMW}.

\begin{definition}
\label{def:quasidegrees}
Let $M$ be an $A$-graded $\CC[\del_x]$-module. The set of \emph{true
  degrees of $M$} is 
\[
\tdeg(M) = \{ \beta \in \CC^d \mid M_\beta \neq 0 \}.
\]
The set of \emph{quasidegrees of $M$}, denoted $\qdeg(M)$, is the
Zariski closure in $\CC^d$ of $\tdeg(M)$.
\endrk
\end{definition}

\begin{definition}[{\cite[Definition~4.3]{DMM/primdec},~\cite[Definitions~1.11
    and~6.9]{DMM/D-mod}}]
\label{def:toralAndean}
Let $A$ be as in Definition~\ref{def:latticeBasisDmod}, and let $I$ be
an $A$-graded 
binomial $\CC[\del_x]$-ideal. By~\cite{eisenbud-sturmfels}, any
associated prime of 
$I$ is of the form $\CC[\del_x]  \cdot J+\<x_j \mid j \notin
\sigma\>$, where $\sigma 
\subset \{1,2,\dots,n\}$ and $J \subset \CC[\del_{x_i} \mid i \in
\sigma]$ is a prime binomial ideal containing no monomials. Such an
associated prime is called 
\emph{toral} if the dimension of $\CC[\del_{x_i} \mid i \in \sigma]/J$
equals the 
rank of the submatrix of $A$ consisting of the columns indexed by
$\sigma$. An associated prime of $I$ which is not toral is called
\emph{Andean}. 

Consider a primary decomposition $I = \bigcap_{\ell = 1}^N C_\ell$, where
$C_1,C_2,\dots,C_K$ are the primary components corresponding to Andean
associated primes and $C_{K+1},C_{K+2},\dots,C_N$ are the components
corresponding to toral associated primes. The \emph{Andean
  arrangement of $I$} is 
\[
\hspace*{145.5pt}
\mathscr{Z}_{\textnormal{Andean}}(I)\defeq
\bigcup_{\ell=1}^K \qdeg \left(\CC[\del_x]/C_\ell\right).
\hspace*{145.5pt}
\hexagon
\] 
\end{definition}

The name \emph{Andean} refers to an intuitive
picture of the grading of an Andean module
(see~\cite[Remark~5.3]{DMM/D-mod}). 

Since Andean primes may be embedded, the definition of the Andean
arrangement seems a priori to depend on the specific primary
decomposition; however,~\cite[Theorem~6.3]{DMM/D-mod} shows that this is not the case. We will make use of the following 
Theorem~\ref{thm:toralHolonomic}, whose first part is a special case
of~\cite[Theorem~6.3]{DMM/D-mod}, while its second part is proved in~\cite{binomial-slopes}.  

We recall that the \emph{holonomic rank} of a $D$-module is the
dimension of its space of germs of holomorphic solutions at a
generic (nonsingular) point.

\begin{theorem}
\label{thm:toralHolonomic}
Use the notation from Definitions~\ref{def:latticeBasisDmod}
and~\ref{def:toralAndean}. The following 
are equivalent.
\vspace{-2mm}
\begin{enumerate}
\item The $D_\barX$-module $D_\barX/H(B,\kappa)$ has
  finite holonomic rank.
\item The $D_\barX$-module $D_\barX/H(B,\kappa)$ is
  holonomic.
\item $A\kappa \notin \mathscr{Z}_{\textnormal{Andean}}(H(B,\kappa))$.
\end{enumerate}
\vspace{-2mm}
In addition, $D_\barX/H(B,\kappa)$ is regular holonomic if and only if
it is holonomic and the rows of $B$ sum to $\mathbf{0}_m$.
\qed
\end{theorem}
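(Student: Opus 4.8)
The plan is to establish Theorem~\ref{thm:toralHolonomic} in two separate pieces, matching the two sentences of the statement: first the equivalence of (1), (2), and (3), and then the regularity criterion. For the first piece, the implication (2)$\Rightarrow$(1) is the trivial direction, since holonomic $D$-modules have finite holonomic rank by the standard theory. The substantive content is the equivalence of (1) and (3), which I would deduce as a special case of~\cite[Theorem~6.3]{DMM/D-mod}: that result classifies exactly when a binomial $D$-module $D_\barX/(I + \langle E - \beta\rangle)$ has finite holonomic rank, the answer being precisely that $\beta \notin \mathscr{Z}_{\textnormal{Andean}}(I)$. Here $I = I(B)$ is a lattice basis ideal (hence $A$-graded binomial), $\beta = A\kappa$, and the Andean arrangement is well-defined by~\cite[Theorem~6.3]{DMM/D-mod} independent of the chosen primary decomposition, as already noted in the excerpt. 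So I would invoke that theorem with the dictionary $I \leftrightarrow I(B)$, $\beta \leftrightarrow A\kappa$. To close the loop I then need (1)$\Rightarrow$(2), i.e., finite holonomic rank forces holonomicity for these modules; this is again part of the cited binomial $D$-module theory — for binomial Horn-type systems, finiteness of rank and holonomicity coincide (the module is a successive extension of pieces each of which is either holonomic or zero when the parameter avoids the Andean locus).

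For the second piece, the regularity statement, I would argue that holonomicity is clearly necessary for regular holonomicity (regular holonomic $\Rightarrow$ holonomic by definition). So assume $D_\barX/H(B,\kappa)$ is holonomic, equivalently $A\kappa \notin \mathscr{Z}_{\textnormal{Andean}}(H(B,\kappa))$, and one must show: regular holonomic $\iff$ the rows of $B$ sum to $\mathbf{0}_m$. This is where I would cite~\cite{binomial-slopes}: the slopes of a holonomic lattice basis $D$-module are controlled by the combinatorics of $B$, and the module is regular (all slopes zero) precisely when the rows of $B$ sum to zero. The condition "rows of $B$ sum to $\mathbf{0}_m$" is the natural homogeneity/quasi-homogeneity condition — it is the analogue, for lattice basis ideals, of the requirement that the toric ideal $I_A$ be homogeneous (equivalently, that $\mathbf{1}$ lie in the row span of $A$), which is the classical GKZ regularity criterion of Hotta and Schulze–Walther. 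Indeed, $\sum_i B_i = \mathbf 0$ is equivalent to $(1,\dots,1)$ being in the left kernel of $B$, i.e., in the row span of $A$, so the criterion matches the GKZ picture exactly.

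I expect the main obstacle to be bookkeeping rather than conceptual: carefully matching the hypotheses and notation of~\cite[Theorem~6.3]{DMM/D-mod} and of~\cite{binomial-slopes} to the present setup, in particular confirming that the lattice basis ideal $I(B)$ and the Euler operators $E - A\kappa$ assemble into exactly the shape of binomial $D$-module treated there (the $A$-grading with $\deg(\dx{i}) = a_i$ is already verified in the excerpt, and $AB = 0$ guarantees $I(B)$ is $A$-graded), and that "true degrees / quasidegrees" here agree with the invariants those papers use. Since the theorem is explicitly stated in the excerpt as a repackaging of already-published results, the proof should be short: a paragraph citing~\cite[Theorem~6.3]{DMM/D-mod} for the equivalence of (1)--(3), together with the standard fact that finite rank and holonomicity coincide in this class, and a paragraph citing~\cite{binomial-slopes} for the slope/regularity statement, observing that $\sum_i B_i = \mathbf 0_m$ is the regularity condition. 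No new computation is needed beyond checking that the cited hypotheses apply.
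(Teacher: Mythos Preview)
Your proposal is correct and matches the paper's approach exactly: the paper does not give an independent proof but simply states that the equivalence of (1)--(3) is a special case of~\cite[Theorem~6.3]{DMM/D-mod} and that the regularity criterion is proved in~\cite{binomial-slopes}, with the theorem marked by a \qed. Your additional remarks (the trivial direction (2)$\Rightarrow$(1), and the observation that $\sum_i B_i = \mathbf{0}_m$ is equivalent to $(1,\dots,1)$ lying in the row span of $A$) are accurate elaborations but go slightly beyond what the paper records.
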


We need one more result in order to prove Corollary~\ref{cor:restriction}.
Let $Z=(\CC^*)^n$, and consider its ring of differential operators $D_Z \defeq \CC[z_1^{\pm 1},\dots,z_m^{\pm
  1}]\otimes_{\CC[z_1,z_2,\dots,z_m]} D_\barZ$.
The \emph{saturated Horn system corresponding to $B$ and $\kappa$} is 
$\sHorn(B,\kappa) \defeq D_Z \cdot \Horn(B,\kappa) \cap D_\barZ$.

\begin{theorem}[{\cite[Corollary~7.2]{bmw-functor}}]
\label{thm:satHorn}
The $D_\barX$-module $D_\barX/H(B,\kappa)$ is (regular) holonomic if and
only if the $D_\barZ$-module $D_\barZ/\sHorn(B,\kappa)$ is (regular) holonomic.
\qed
\end{theorem}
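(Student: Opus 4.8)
The plan is to reduce both sides to algebraic tori, where a monomial change of variables identifies them, and then to recover the affine statements by controlling behaviour along the coordinate hyperplanes; this is the strategy of \cite{bmw-functor}. The geometric bridge is the monomial map
\[
\pi\colon (\CC^*)^n \longrightarrow (\CC^*)^m,\qquad
\pi(x_1,\dots,x_n)=\bigl(x^{b_1},x^{b_2},\dots,x^{b_m}\bigr),
\]
where $b_1,\dots,b_m$ are the columns of $B$. Since $\rank B=m$, the differential of $\pi$ (the matrix $B^{\mathrm{T}}$ in logarithmic coordinates) has constant rank $m$, so $\pi$ is a smooth surjective homomorphism of tori with $d$-dimensional torus fibres.

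First, by the definition $\sHorn(B,\kappa)=D_Z\cdot\Horn(B,\kappa)\cap D_\barZ$, the localization of $D_\barZ/\sHorn(B,\kappa)$ at $z_1z_2\cdots z_m$ is $M^\circ\defeq D_{(\CC^*)^m}/D_{(\CC^*)^m}\cdot\Horn(B,\kappa)$, and $D_\barZ/\sHorn(B,\kappa)$ embeds as the $D_\barZ$-submodule of $M^\circ$ generated by $1$, with $M^\circ$ as its localization. A direct computation --- conjugating by $x^{\kappa}$ (which shifts $\theta_i\mapsto\theta_i+\kappa_i$), clearing the units $x^{(b_k)_-}$ so that the pulled-back Horn operators become the localized binomials of $I(B)$, exactly as in the proof of Theorem~\ref{thm:restriction} but carried out on the torus and with no identity-submatrix hypothesis, and observing that $AB=0$ makes the Euler operators $E-A\kappa$ annihilate $\OO\,x^{\kappa}\otimes\pi^{*}M^\circ$ outright --- identifies $\OO\,x^{\kappa}\otimes_{\OO}\pi^{*}M^\circ$ with the restriction $N^\circ$ of $D_\barX/H(B,\kappa)$ to $(\CC^*)^n$. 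Since $\pi$ is smooth and surjective and $\OO\,x^{\kappa}$ is a regular rank-one connection, $M^\circ$ is holonomic (resp.\ regular holonomic) if and only if $N^\circ$ is, and the two have the same holonomic rank.

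It remains to descend from the tori to $\barZ$ and $\barX$. That $D_\barZ/\sHorn(B,\kappa)$ is holonomic iff $M^\circ$ is follows because localization preserves holonomicity and a $D_\barZ$-submodule of the holonomic module $M^\circ$ is holonomic. That $D_\barX/H(B,\kappa)$ is holonomic iff $N^\circ$ is follows because localization gives the forward direction, and conversely a holonomic $N^\circ$ has finite holonomic rank, which --- a generic point of $\barX$ lying in $(\CC^*)^n$ --- equals the generic holonomic rank of $D_\barX/H(B,\kappa)$, whence $D_\barX/H(B,\kappa)$ is holonomic by the implication (1)$\Rightarrow$(2) of Theorem~\ref{thm:toralHolonomic}, itself a consequence of the toral/Andean primary decomposition of $H(B,\kappa)$ (Definition~\ref{def:toralAndean}, \cite{eisenbud-sturmfels,DMM/D-mod}). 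Chaining these with the torus identification proves the holonomic equivalence. For the regular-holonomic statement, insert ``regular'' throughout and use in addition, on each side, that a holonomic module there is regular holonomic exactly when the rows of $B$ sum to $\mathbf{0}_m$: for $D_\barX/H(B,\kappa)$ this is the second clause of Theorem~\ref{thm:toralHolonomic}, while for $D_\barZ/\sHorn(B,\kappa)$ it follows via the torus identification from the facts that the Horn operators are Fuchsian along each hyperplane $z_k=0$, so no irregularity is introduced there, and that any slope produced when the rows of $B$ fail to sum to $\mathbf{0}_m$ already appears on $(\CC^*)^m$ (cf.\ \cite{binomial-slopes}). Since the row-sum condition depends only on $B$, it matches on the two sides, and the regular-holonomic equivalence follows from the holonomic one.

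The main difficulty is the $X$-side descent above: because every column of $B$ has a negative entry, $\pi$ does not extend to $\barX=\CC^n$, so $D_\barX/H(B,\kappa)$ cannot be compared with $D_\barZ/\sHorn(B,\kappa)$ through a morphism of affine spaces, and its behaviour along $\{x_1\cdots x_n=0\}$ must be controlled by the structure theory of binomial $D$-modules --- toral/Andean primary decomposition, quasidegrees, and Euler--Koszul homology. That machinery, developed in \cite{DMM/D-mod} and deployed in \cite{bmw-functor}, is the technical core of the proof; the torus-level identification above is only its geometric skeleton.
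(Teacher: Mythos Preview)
The paper does not prove Theorem~\ref{thm:satHorn}; it is quoted verbatim from \cite[Corollary~7.2]{bmw-functor} with a \textsf{qed} box and no argument, so there is no in-paper proof to compare against. What you have written is a sketch of the argument that \cite{bmw-functor} carries out, and as a roadmap it is accurate: the torus identification $\OO\,x^{\kappa}\otimes\pi^{*}M^{\circ}\cong N^{\circ}$ is exactly the ``functor'' of that paper, the $Z$-side descent via $j_{*}$ of the open torus embedding and passage to the saturated submodule is correct, and the $X$-side descent via the implication $(1)\Rightarrow(2)$ of Theorem~\ref{thm:toralHolonomic} is the intended route.

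The one place where your sketch is thinner than the actual proof is the regular-holonomic direction ``$D_\barZ/\sHorn(B,\kappa)$ regular $\Rightarrow$ $D_\barX/H(B,\kappa)$ regular.'' You reduce both sides to the row-sum condition, but for $\sHorn$ you appeal to ``the Horn operators are Fuchsian along each $z_k=0$, so no irregularity is introduced there.'' Fuchsian generators do not by themselves force regularity of the quotient module; what is really needed is that when the rows of $B$ fail to sum to zero, the resulting slope of $H(B,\kappa)$ lives at infinity (not along a coordinate hyperplane), hence is already visible on $N^{\circ}$, hence on $M^{\circ}$, hence in $j_{*}M^{\circ}$ and so in $D_\barZ/\sHorn(B,\kappa)$. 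You gesture at this with the phrase ``any slope produced \dots\ already appears on $(\CC^*)^m$,'' and your final paragraph correctly flags that this is where the binomial/Euler--Koszul machinery of \cite{DMM/D-mod,binomial-slopes,bmw-functor} is doing the work; just be aware that the Fuchsian remark is not the load-bearing step.
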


\begin{proof}[Proof of Corollary~\ref{cor:restriction}]
  If $D_\barX/H(B,\kappa)$ is (regular) holonomic, then so
  is $D_\barZ/\nHorn(B,\kappa)$ by Theorem~\ref{thm:restriction},
  since restrictions preserve (regular) holonomicity.  
  For the
  converse, if $D_\barX/H(B,\kappa)$ is not (regular)
  holonomic, then neither is $D_\barZ/\sHorn(B,\kappa)$ by
  Theorem~\ref{thm:satHorn}. Since
  $\nHorn(B,\kappa) \subseteq \sHorn(B,\kappa)$, and the category of
  (regular) holonomic $D_\barZ$-modules is closed under quotients of
  $D_\barZ$-modules, $D_\barZ/\nHorn(B,\kappa)$ also fails to be
  (regular) holonomic.
\end{proof}

%%%%%%%%%%%%%%%%%%%%%%%%%%%%%%%%%%
%%%%%%%%%%%%%%%%%%%%%%%%%%%%%%%%%%
\raggedbottom
\def\cprime{$'$} \def\cprime{$'$}
\providecommand{\href}[2]{#2}
%%%%%%%%%%%%%%%%%%%%%%%%%%%%%%%%%%
%%%%%%%%%%%%%%%%%
\end{document}